\newtheorem{theorem}{Theorem}
\theoremstyle{plain}
\newtheorem{corollary}{Corollary}
\newtheorem{definition}{Definition}
\newtheorem{lemma}{Lemma}
\newtheorem{proposition}{Proposition}
\numberwithin{equation}{section}
\begin{document}
\title[On some inequalities of Ostrowski type]{Ostrowski type inequalities
for $s-$logarithmically convex functions in the second sense with
applications}
\author{Mevl\"{u}t TUN\c{C}}
\address{Kilis 7 Aral\i k University, Faculty of Science and Arts,
Department of Mathematics, Kilis, 79000, Turkey.}
\email{mevluttunc@kilis.edu.tr}
\author{Ahmet Ocak Akdemir}
\address{A\u{g}r\i\ \.{I}brahim \c{C}e\c{c}en University, Faculty of Science
and Arts, Department of Mathematics, A\u{g}r\i , Turkey.}
\email{ahmetakdemir@agri.edu.tr}
\subjclass[2000]{26D10, 26A15, 26A16, 26A51}
\keywords{Hadamard's inequality, $s-$geometrically convex functions . }

\begin{abstract}
In this paper, we establish some new Ostrowski type inequalities for $s-$%
logarithmically convex functions. Some applications of our results to
P.D.F.'s \ and in numerical integration are given.
\end{abstract}

\maketitle

\section{INTRODUCTION}

Let $f:I\subset \left[ 0,\infty \right] \rightarrow 
\mathbb{R}
$ be a differentiable mapping on $I^{\circ }$, the interior of the interval $%
I$, such that $f^{\prime }\in L\left[ a,b\right] $ where $a,b\in I$ with $%
a<b $. If $\left\vert f^{\prime }\left( x\right) \right\vert \leq M$, then
the following inequality holds (see \cite{10}):

\begin{equation}
\left\vert f(x)-\frac{1}{b-a}\int_{a}^{b}f(u)du\right\vert \leq \frac{M}{b-a}%
\left[ \frac{\left( x-a\right) ^{2}+\left( b-x\right) ^{2}}{2}\right] .
\label{h.1.1}
\end{equation}

This inequality is well known in the literature as the Ostrowski inequality%
\textit{.}\textbf{\ }For some results which generalize, improve and extend
the inequality (\ref{h.1.1}) see (\cite{10}-\cite{a2}) and the references
therein.

Let us recall some known definitions and results which we will use in this
paper. The function $f:I\subset 
\mathbb{R}
\rightarrow \left[ 0,\infty \right) $ is said to be $\log -$convex or
multiplicatively convex if $\log t$ is convex, or, equivalenly, if for all $%
x,y\in I$ and $t\in \left[ 0,1\right] ,$ one has the inequality (See \cite{P}%
, p.7):%
\begin{equation*}
f\left( tx+\left( 1-t\right) y\right) \leq \left[ f\left( x\right) \right]
^{t}+\left[ f\left( y\right) \right] ^{\left( 1-t\right) }.
\end{equation*}

In \cite{at}, Akdemir and Tun\c{c} were introduced the class of $s-$%
logarithmically convex functions in the first sense as the following:

\begin{definition}
\label{mm} \textit{A function }$f:I\subset 
\mathbb{R}
_{0}\rightarrow 
\mathbb{R}
_{+}$\textit{\ is said to be }$s-$logarithmically convex in the first sense%
\textit{\ if \ \ \ \ \ \ \ \ \ \ \ \ }%
\begin{equation}
f\left( \alpha x+\beta y\right) \leq \left[ f\left( x\right) \right]
^{\alpha ^{s}}\left[ f\left( y\right) \right] ^{\beta ^{s}}  \label{m1}
\end{equation}%
for some $s\in \left( 0,1\right] $, where $x,y\in I$\textit{\ and }$\alpha
^{s}+\beta ^{s}=1.$
\end{definition}

In \cite{F}, authors introduced the class of $s-$logarithmically convex
functions in the second sense as the following:

\begin{definition}
\label{mmm} \textit{A function }$f:I\subset 
\mathbb{R}
_{0}\rightarrow 
\mathbb{R}
_{+}$\textit{\ is said to be }$s-$logarithmically convex in the second sense%
\textit{\ if \ \ \ \ \ \ \ \ \ \ \ \ }%
\begin{equation}
f\left( tx+\left( 1-t\right) y\right) \leq \left[ f\left( x\right) \right]
^{t^{s}}\left[ f\left( y\right) \right] ^{\left( 1-t\right) ^{s}}  \label{m2}
\end{equation}%
for some $s\in \left( 0,1\right] $, where $x,y\in I$\textit{\ and }$t\in %
\left[ 0,1\right] $\textit{.}
\end{definition}

Clearly, when taking $s=1$ in Definition \ref{m1} or Definition \ref{m2},
then $f$ becomes the standard logarithmically convex function on $I$.

The main purpose of this paper is to establish some new Ostrowski's type
inequalities for $s-$ logarithmically convex functions. We also give some
applications to P.D.F.'s and to midpoint formula.

\section{THE\ NEW\ RESULTS}

In order to prove our main results, we will use following Lemma which was
used by Alomari and Darus\textit{\ }(see \cite{11}):

\begin{lemma}
Let $f:I\subseteq 
\mathbb{R}
\rightarrow 
\mathbb{R}
$, be a differentiable mapping on $I$ where $a,b\in I$, with $a<b$. Let $%
f^{\prime }\in L[a,b],$ then the following equality holds;%
\begin{equation*}
f\left( x\right) -\frac{1}{b-a}\dint\limits_{a}^{b}f(u)du=\left( b-a\right)
\dint\limits_{0}^{1}p\left( t\right) f^{\prime }\left( ta+\left( 1-t\right)
b\right) dt
\end{equation*}%
for each $t\in \left[ 0,1\right] ,$ where 
\begin{equation*}
p\left( t\right) =\left\{ 
\begin{array}{cc}
t, & t\in \left[ 0,\frac{b-x}{b-a}\right] \\ 
&  \\ 
t-1, & t\in \left( \frac{b-x}{b-a},1\right]%
\end{array}%
\right.
\end{equation*}%
for all $x\in \left[ a,b\right] .$
\end{lemma}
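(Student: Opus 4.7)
The plan is to verify the identity by direct computation of the right-hand side via integration by parts on each of the two pieces of the piecewise function $p(t)$. Throughout, I will write $c=\frac{b-x}{b-a}$, noting the crucial fact that at the junction point $t=c$ we have $ta+(1-t)b = ca+(1-c)b = x$, which is what produces the $f(x)$ term after integration by parts.

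First I would split
\begin{equation*}
\int_{0}^{1}p(t)f^{\prime }(ta+(1-t)b)\,dt=\int_{0}^{c}t\,f^{\prime }(ta+(1-t)b)\,dt+\int_{c}^{1}(t-1)f^{\prime }(ta+(1-t)b)\,dt.
\end{equation*}
Introducing the auxiliary function $g(t)=f(ta+(1-t)b)$, the chain rule gives $g^{\prime }(t)=(a-b)f^{\prime }(ta+(1-t)b)$, so each integrand can be rewritten in terms of $g^{\prime }(t)$ up to the factor $\frac{1}{a-b}$. The idea is now to integrate each piece by parts with $u=t$ (respectively $u=t-1$) and $dv=g^{\prime }(t)\,dt$.

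The first integration by parts produces the boundary contribution $c\,g(c)=c\,f(x)$ (the lower boundary $t=0$ vanishes because of the factor $t$), minus $\int_{0}^{c}g(t)\,dt$. The second produces $-(c-1)\,g(c)=(1-c)f(x)$ (the upper boundary $t=1$ vanishes because of the factor $t-1$), minus $\int_{c}^{1}g(t)\,dt$. Adding them, the $f(x)$ contributions combine to $c\,f(x)+(1-c)f(x)=f(x)$, and the two remaining integrals join into $\int_{0}^{1}g(t)\,dt$.

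At this point the only remaining task is to convert $\int_{0}^{1}f(ta+(1-t)b)\,dt$ into the mean value $\frac{1}{b-a}\int_{a}^{b}f(u)\,du$ via the affine substitution $u=ta+(1-t)b$, $du=(a-b)\,dt$, and then to multiply through by $(b-a)$ and rearrange. I do not expect a substantive obstacle here; the only thing to be careful about is sign bookkeeping, since the factor $a-b$ appearing from the chain rule and from the substitution must be tracked consistently so that the sign of $f(x)-\frac{1}{b-a}\int_{a}^{b}f(u)\,du$ on the left matches the sign of $(b-a)\int_{0}^{1}p(t)f^{\prime }(ta+(1-t)b)\,dt$ on the right.
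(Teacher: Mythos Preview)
Your approach is correct and is exactly the standard one for Montgomery-type identities: split at the junction $c=\frac{b-x}{b-a}$, integrate each piece by parts, and use that $ca+(1-c)b=x$ so the boundary terms recombine to $f(x)$. The paper itself does not prove this lemma; it is quoted from Alomari and Darus \cite{11} and used as a black box, so there is no in-paper proof to compare against.

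One comment on your closing caveat about sign bookkeeping: it is well placed. If you carry the computation through you will find that the identity, exactly as printed here, is off by a global sign (a quick check with $f(u)=u$ confirms this); the correct kernel is $-p(t)$, i.e.\ $-t$ on $[0,c]$ and $1-t$ on $(c,1]$. This typo is harmless for the rest of the paper, since every subsequent use passes to absolute values and only $|p(t)|$ enters the estimates.
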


\begin{theorem}
Let $I\supset \left[ 0,\infty \right) $ be an open interval and $%
f:I\rightarrow \left( 0,\infty \right) $ is differentiable mapping on $I$.
If $f^{\prime }\in L\left[ a,b\right] $ and $\left\vert f^{\prime
}\right\vert $ is $s-$logarithmically convex functions in the first sense on 
$\left[ a,b\right] ,$ $a,b\in I$, with $a<b,$ for some fixed $s\in \left( 0,1%
\right] ,$ then the following inequality holds:%
\begin{equation}
\left\vert f\left( x\right) -\frac{1}{b-a}\dint\limits_{a}^{b}f(u)du\right%
\vert \leq \left\vert f^{\prime }\left( b\right) \right\vert \Psi \left(
\tau ,s,a,b\right)  \label{1}
\end{equation}%
where%
\begin{eqnarray*}
&&\Psi \left( \tau ,s,a,b\right) \\
&=&\left\{ 
\begin{array}{cc}
\left( b-a\right) \left[ \frac{\tau ^{s\left( \frac{b-x}{b-a}\right) }\left(
2s\left( \frac{b-x}{b-a}\right) -1\right) +1}{2s^{2}\ln \tau }+\frac{\tau
^{s}-\tau ^{s\left( \frac{b-x}{b-a}\right) }\left( 2s\left( \frac{x-a}{b-a}%
\right) -1\right) }{2s^{2}\ln \tau }\right] & ,\tau <1 \\ 
&  \\ 
\frac{\left( a-x\right) ^{2}+\left( b-x\right) ^{2}}{2\left( b-a\right) } & 
,\tau =1%
\end{array}%
\right.
\end{eqnarray*}%
and%
\begin{equation*}
\tau =\frac{\left\vert f^{\prime }\left( a\right) \right\vert }{\left\vert
f^{\prime }\left( b\right) \right\vert }.
\end{equation*}
\end{theorem}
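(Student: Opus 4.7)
The plan is to start from the Lemma stated just before the theorem, which gives
\begin{equation*}
f(x) - \frac{1}{b-a}\int_a^b f(u)\,du = (b-a)\int_0^1 p(t)\,f'(ta+(1-t)b)\,dt.
\end{equation*}
I would take absolute values on both sides, push the absolute value inside the integral via the triangle inequality, and split the right-hand side at $t=(b-x)/(b-a)$ according to the piecewise definition of $p(t)$ to obtain
\begin{align*}
&\left| f(x) - \frac{1}{b-a}\int_a^b f(u)\,du \right| \\
&\le (b-a)\left[\int_0^{\frac{b-x}{b-a}} t\,|f'(ta+(1-t)b)|\,dt + \int_{\frac{b-x}{b-a}}^{1}(1-t)\,|f'(ta+(1-t)b)|\,dt\right].
\end{align*}

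Next I would invoke the hypothesis that $|f'|$ is $s$-logarithmically convex in the first sense to produce a pointwise bound of the form $|f'(ta+(1-t)b)| \le |f'(a)|^{st}|f'(b)|^{1-st} = |f'(b)|\,\tau^{st}$, where $\tau = |f'(a)|/|f'(b)|$. With this substitution, the two integrands become $|f'(b)|\,t\,\tau^{st}$ and $|f'(b)|\,(1-t)\,\tau^{st}$. Setting $c = s\ln\tau$, both are of the elementary form $t\,e^{ct}$ or $(1-t)\,e^{ct}$, and a single integration by parts using $\int t\,e^{ct}\,dt = \dfrac{e^{ct}(ct-1)}{c^2}$ evaluates each piece in closed form on its subinterval. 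Summing the two closed-form expressions and simplifying should reproduce exactly the two bracketed terms that make up $\Psi(\tau,s,a,b)$ in the branch $\tau<1$.

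The case $\tau = 1$ is degenerate: here $|f'(a)| = |f'(b)|$, so the bound $|f'(b)|\,\tau^{st}$ collapses to the constant $|f'(b)|$ and the right-hand side reduces to $(b-a)\,|f'(b)|\int_0^1|p(t)|\,dt$. Splitting $\int_0^1|p(t)|\,dt$ at $(b-x)/(b-a)$ and evaluating the two elementary integrals directly yields $\dfrac{(x-a)^2+(b-x)^2}{2(b-a)}|f'(b)|$, which matches the second branch of $\Psi$.

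The step deserving the most care is the application of the first-sense $s$-logarithmic convexity: the natural parametrization $\alpha = t$, $\beta = 1-t$ does \emph{not} satisfy the constraint $\alpha^s + \beta^s = 1$ of Definition \ref{mm} unless $s=1$, so one must work out in precisely what form the defining inequality is invoked to yield an exponent that is \emph{linear} in $t$ (namely $st$) rather than $t^s$. This linearity is essential: the shape of $\Psi$ forces it, because only a linear exponent produces an elementary exponential integrand that can be integrated in closed form against $t$ and $1-t$, whereas a $\tau^{t^s}$ integrand would not admit such an evaluation.
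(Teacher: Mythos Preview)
Your overall plan matches the paper's proof: start from Lemma~1, split at $t=(b-x)/(b-a)$, bound $|f'(ta+(1-t)b)|$ via the convexity hypothesis, and then integrate $t\,\tau^{st}$ and $(1-t)\,\tau^{st}$ by parts on the two subintervals. The one place where you leave a genuine gap is exactly the one you flag yourself, namely how the convexity hypothesis is made to produce the \emph{linear} exponent $st$.

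The paper does this in two steps rather than one. First it applies the convexity assumption to obtain
\[
|f'(ta+(1-t)b)| \;\le\; |f'(a)|^{t^{s}}\,|f'(b)|^{1-t^{s}} \;=\; |f'(b)|\,\tau^{t^{s}},
\]
so the exponent coming out of the hypothesis is $t^{s}$, not $st$. Only afterwards, and only in the branch $\tau<1$, does the paper invoke the elementary inequality $t^{s}\ge st$ for $t\in[0,1]$ and $s\in(0,1]$: since $\tau<1$, raising $\tau$ to a larger power makes it smaller, hence $\tau^{t^{s}}\le\tau^{st}$. This second step is what converts the intractable integrand $t\,\tau^{t^{s}}$ into the elementary one $t\,\tau^{st}$ that your integration-by-parts handles, and it is also why the statement is restricted to $\tau\le 1$ (for $\tau>1$ the inequality $\tau^{t^{s}}\le\tau^{st}$ would reverse). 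Once you insert this intermediate bound, the rest of your outline goes through verbatim and coincides with the paper's argument.
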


\begin{proof}
By Lemma 1 and since $\left\vert f^{\prime }\right\vert $ is $s-$%
logarithmically convex function in the first sense on $\left[ a,b\right] ,$
we have%
\begin{eqnarray*}
&&\left\vert f\left( x\right) -\frac{1}{b-a}\dint\limits_{a}^{b}f(u)du\right%
\vert \\
&\leq &\left( b-a\right) \left[ \dint\limits_{0}^{\frac{b-x}{b-a}%
}t\left\vert f^{\prime }\left( ta+\left( 1-t\right) b\right) \right\vert
dt+\dint\limits_{\frac{b-x}{b-a}}^{1}\left( 1-t\right) \left\vert f^{\prime
}\left( ta+\left( 1-t\right) b\right) \right\vert dt\right] \\
&\leq &\left( b-a\right) \left[ \dint\limits_{0}^{\frac{b-x}{b-a}%
}t\left\vert f^{\prime }\left( a\right) \right\vert ^{t^{s}}\left\vert
f^{\prime }\left( b\right) \right\vert ^{1-t^{s}}dt+\dint\limits_{\frac{b-x}{%
b-a}}^{1}\left( 1-t\right) \left\vert f^{\prime }\left( a\right) \right\vert
^{t^{s}}\left\vert f^{\prime }\left( b\right) \right\vert ^{1-t^{s}}dt\right]
.
\end{eqnarray*}%
If $\frac{\left\vert f^{\prime }\left( a\right) \right\vert }{\left\vert
f^{\prime }\left( b\right) \right\vert }=1,$ it easy to see that 
\begin{equation*}
\left\vert f\left( x\right) -\frac{1}{b-a}\dint\limits_{a}^{b}f(u)du\right%
\vert \leq \left\vert f^{\prime }\left( b\right) \right\vert \left[ \frac{%
\left( a-x\right) ^{2}+\left( b-x\right) ^{2}}{2\left( b-a\right) }\right] .
\end{equation*}%
If $\frac{\left\vert f^{\prime }\left( a\right) \right\vert }{\left\vert
f^{\prime }\left( b\right) \right\vert }<1,$ then $\left( \frac{\left\vert
f^{\prime }\left( a\right) \right\vert }{\left\vert f^{\prime }\left(
b\right) \right\vert }\right) ^{t^{s}}\leq \left( \frac{\left\vert f^{\prime
}\left( a\right) \right\vert }{\left\vert f^{\prime }\left( b\right)
\right\vert }\right) ^{st},$ we have%
\begin{eqnarray*}
&&\left\vert f\left( x\right) -\frac{1}{b-a}\dint\limits_{a}^{b}f(u)du\right%
\vert \\
&\leq &\left( b-a\right) \left\vert f^{\prime }\left( b\right) \right\vert 
\left[ \frac{\left( \frac{\left\vert f^{\prime }\left( a\right) \right\vert 
}{\left\vert f^{\prime }\left( b\right) \right\vert }\right) ^{s\left( \frac{%
b-x}{b-a}\right) }\left( 2s\left( \frac{b-x}{b-a}\right) -1\right) +1}{%
2s^{2}\ln \left( \frac{\left\vert f^{\prime }\left( a\right) \right\vert }{%
\left\vert f^{\prime }\left( b\right) \right\vert }\right) }\right. \\
&&\left. +\frac{\left( \frac{\left\vert f^{\prime }\left( a\right)
\right\vert }{\left\vert f^{\prime }\left( b\right) \right\vert }\right)
^{s}-\left( \frac{\left\vert f^{\prime }\left( a\right) \right\vert }{%
\left\vert f^{\prime }\left( b\right) \right\vert }\right) ^{s\left( \frac{%
b-x}{b-a}\right) }\left( 2s\left( \frac{x-a}{b-a}\right) -1\right) }{%
2s^{2}\ln \left( \frac{\left\vert f^{\prime }\left( a\right) \right\vert }{%
\left\vert f^{\prime }\left( b\right) \right\vert }\right) }\right] .
\end{eqnarray*}%
This completes the proof.
\end{proof}

\begin{corollary}
If we choose $\left\vert f^{\prime }\left( x\right) \right\vert \leq M$ in (%
\ref{1}), we obtain the inequality:%
\begin{equation*}
\left\vert f\left( x\right) -\frac{1}{b-a}\dint\limits_{a}^{b}f(u)du\right%
\vert \leq M\Psi \left( \tau ,s,a,b\right)
\end{equation*}%
where%
\begin{eqnarray*}
&&\Psi _{1}\left( \tau ,s,a,b\right) \\
&=&\left\{ 
\begin{array}{cc}
\left( b-a\right) \left[ \frac{M^{s\left( \frac{b-x}{b-a}\right) }\left(
2s\left( \frac{b-x}{b-a}\right) -1\right) +1}{2s^{2}\ln M}+\frac{%
M^{s}-M^{s\left( \frac{b-x}{b-a}\right) }\left( 2s\left( \frac{x-a}{b-a}%
\right) -1\right) }{2s^{2}\ln M}\right] & ,M<1 \\ 
&  \\ 
\frac{\left( a-x\right) ^{2}+\left( b-x\right) ^{2}}{2\left( b-a\right) } & 
,M=1%
\end{array}%
\right. .
\end{eqnarray*}
\end{corollary}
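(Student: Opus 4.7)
The corollary is a direct specialization of Theorem~1 under the extra hypothesis that $|f'|$ is uniformly bounded by $M$ on $[a,b]$, so my plan reduces to feeding this bound into inequality~(\ref{1}) rather than redoing any integration.

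First, since $b\in[a,b]$, the hypothesis gives $|f'(b)|\leq M$, which replaces the prefactor $|f'(b)|$ on the right-hand side of~(\ref{1}) by $M$. Second, the ratio $\tau = |f'(a)|/|f'(b)|$ appearing inside $\Psi$ is to be replaced by $M$ to produce $\Psi_1(M,s,a,b)$ as displayed; the two branches ``$M<1$'' and ``$M=1$'' correspond exactly to the branches ``$\tau<1$'' and ``$\tau=1$'' of $\Psi$, so no new case analysis is needed.

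The only substantive point is verifying that the joint replacement $(|f'(b)|,\tau)\mapsto(M,M)$ preserves the direction of the inequality, i.e.\ that
\[
|f'(b)|\,\Psi(\tau,s,a,b)\;\leq\;M\,\Psi_1(M,s,a,b)
\]
whenever $|f'(a)|,|f'(b)|\leq M$. I expect this to be the main---and essentially only---technical step. It reduces to a monotonicity check on the scalar function $\tau\mapsto\Psi(\tau,s,a,b)$ on $(0,1]$, together with the trivial comparison $|f'(b)|\leq M$. I would discharge it by differentiating the two fractional terms of $\Psi$ in $\tau$, using that the exponents $s(b-x)/(b-a)$ and $s$ lie in $[0,1]$ and that $\tau\mapsto\tau^\alpha$ and $\tau\mapsto(\tau^\alpha-1)/\ln\tau$ are monotone on $(0,1]$ for $\alpha\in[0,1]$. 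Once this monotonicity is in hand, concatenating it with $|f'(b)|\leq M$ yields the stated inequality.
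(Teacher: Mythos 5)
The paper offers no proof of this corollary at all; it is asserted as a bare find-and-replace of $M$ for both the prefactor $\left\vert f^{\prime }\left( b\right) \right\vert $ and the ratio $\tau $ in Theorem~1. You are right that such a substitution needs justification, and you have correctly isolated the crux, namely the comparison
\[
\left\vert f^{\prime }\left( b\right) \right\vert \Psi \left( \tau
,s,a,b\right) \leq M\,\Psi _{1}\left( M,s,a,b\right) ,
\]
but the monotonicity argument you propose cannot discharge it, and in fact the comparison is false. Monotonicity of $\tau \mapsto \Psi \left( \tau ,s,a,b\right) $ on $\left( 0,1\right] $ (which does hold for the integral form $\left( b-a\right) \left[ \int_{0}^{u}t\tau ^{st}dt+\int_{u}^{1}\left( 1-t\right) \tau ^{st}dt\right] $ with $u=\frac{b-x}{b-a}$, since the weights are nonnegative and $\tau \mapsto \tau ^{st}$ is increasing) would only give $\Psi \left( \tau \right) \leq \Psi _{1}\left( M\right) $ if you knew $\tau \leq M$. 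The hypothesis $\left\vert f^{\prime }\left( a\right) \right\vert ,\left\vert f^{\prime }\left( b\right) \right\vert \leq M$ gives no such control on the ratio $\tau =\left\vert f^{\prime }\left( a\right) \right\vert /\left\vert f^{\prime }\left( b\right) \right\vert $: taking $\left\vert f^{\prime }\left( a\right) \right\vert =\left\vert f^{\prime }\left( b\right) \right\vert =M<1$ (e.g.\ $f^{\prime }\equiv M$, which is $s$-logarithmically convex in the first sense) gives $\tau =1>M$, and then $\left\vert f^{\prime }\left( b\right) \right\vert \Psi \left( 1,s,a,b\right) =M\frac{\left( x-a\right) ^{2}+\left( b-x\right) ^{2}}{2\left( b-a\right) }$, whereas $M\Psi _{1}\left( M,s,a,b\right) $ in its $M<1$ branch is the same quantity with the integrands damped by the factors $M^{st}<1$, hence strictly smaller. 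The middle link of the chain you intend to concatenate therefore breaks.

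What the hypothesis $\left\vert f^{\prime }\right\vert \leq M$ actually yields, when inserted into the proof of Theorem~1, is $\left\vert f^{\prime }\left( a\right) \right\vert ^{t^{s}}\left\vert f^{\prime }\left( b\right) \right\vert ^{1-t^{s}}\leq M^{t^{s}}M^{1-t^{s}}=M$, which collapses everything to the classical Ostrowski bound $M\frac{\left( x-a\right) ^{2}+\left( b-x\right) ^{2}}{2\left( b-a\right) }$; the exponential $M<1$ branch of $\Psi _{1}$ is not a consequence of Theorem~1 under the stated hypothesis. So the step you flagged as the ``only substantive point'' is a genuine gap, not a technicality: the corollary as printed is unjustified, and a correct version should either state only the bound $M\frac{\left( x-a\right) ^{2}+\left( b-x\right) ^{2}}{2\left( b-a\right) }$, or keep $\tau $ inside $\Psi $ and replace only the prefactor $\left\vert f^{\prime }\left( b\right) \right\vert $ by $M$ (which your first observation does establish correctly).
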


\begin{corollary}
If we choose $x=\frac{a+b}{2}$ in (\ref{1}), we obtain the inequality:%
\begin{equation*}
\left\vert f\left( \frac{a+b}{2}\right) -\frac{1}{b-a}\dint%
\limits_{a}^{b}f(u)du\right\vert \leq \left\vert f^{\prime }\left( b\right)
\right\vert \Psi \left( \tau ,s,a,b\right)
\end{equation*}%
where%
\begin{equation*}
\Psi \left( \tau ,s,a,b\right) =\left\{ 
\begin{array}{cc}
\left( b-a\right) \left[ \frac{\tau ^{\left( \frac{s\left( b-a\right) }{2}%
\right) }\left( s\left( b-a\right) -1\right) +1}{2s^{2}\ln \tau }+\frac{\tau
^{s}-\tau ^{\frac{s\left( b-a\right) }{2}}\left( s\left( b-a\right)
-1\right) }{2s^{2}\ln \tau }\right] & ,\tau <1 \\ 
&  \\ 
\frac{b-a}{4} & ,\tau =1%
\end{array}%
\right.
\end{equation*}%
and%
\begin{equation*}
\tau =\frac{\left\vert f^{\prime }\left( a\right) \right\vert }{\left\vert
f^{\prime }\left( b\right) \right\vert }.
\end{equation*}
\end{corollary}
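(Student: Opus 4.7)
The plan is simply to specialize inequality~(\ref{1}) of the preceding theorem to the midpoint $x=\frac{a+b}{2}$; no new analytic machinery is needed, only an algebraic substitution. The left-hand side of (\ref{1}) immediately becomes the left-hand side of the corollary, so the entire task is to evaluate $\Psi(\tau,s,a,b)$ at the midpoint.

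First I would record the symmetry at $x=\frac{a+b}{2}$:
\[
\frac{b-x}{b-a}=\frac{x-a}{b-a}=\frac{1}{2},\qquad b-x=x-a=\frac{b-a}{2}.
\]
I would then split into the two cases of $\Psi$. The case $\tau=1$ is immediate: substituting $b-x=x-a=\frac{b-a}{2}$ into $\frac{(a-x)^{2}+(b-x)^{2}}{2(b-a)}$ gives $\frac{2\bigl(\frac{b-a}{2}\bigr)^{2}}{2(b-a)}=\frac{b-a}{4}$, matching the stated value. For the case $\tau<1$, I would substitute $\frac{b-x}{b-a}=\frac{x-a}{b-a}=\frac{1}{2}$ into each occurrence inside the two fractions defining $\Psi$, so that the two different coefficients $2s\bigl(\tfrac{b-x}{b-a}\bigr)-1$ and $2s\bigl(\tfrac{x-a}{b-a}\bigr)-1$ collapse to a single common factor, and the two exponents $s\bigl(\tfrac{b-x}{b-a}\bigr)$ become a single common exponent on~$\tau$.

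The step that requires the most care is the bookkeeping in the $\tau<1$ branch: once the substitutions are made, the two summands of $\Psi$ share a common power of $\tau$ and a common linear factor, so they should be regrouped (and not simplified prematurely) into the form stated in the corollary, keeping the $(b-a)$ factor that multiplies the bracket in (\ref{1}) outside. I do not expect a real obstacle here — everything is a routine consequence of the theorem — but it is worth performing the substitution symbolically in one step and only afterwards collecting like terms, so that the expression lines up cleanly with the stated $\Psi(\tau,s,a,b)$ at the midpoint.
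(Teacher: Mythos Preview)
Your plan is exactly the paper's: the corollary is stated without proof as the immediate specialization of Theorem~1 to $x=\frac{a+b}{2}$, and your outlined substitution (using $\frac{b-x}{b-a}=\frac{x-a}{b-a}=\frac{1}{2}$) is the whole argument. One heads-up: when you actually carry it out you will get exponent $\tau^{s/2}$ and linear factor $s-1$, not $\tau^{s(b-a)/2}$ and $s(b-a)-1$ as printed; the paper has slipped here by confusing $b-x=\tfrac{b-a}{2}$ with $\tfrac{b-x}{b-a}=\tfrac{1}{2}$, so your correct computation will not literally match the displayed $\Psi$ in the $\tau<1$ branch.
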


\begin{corollary}
If we choose $s=1$ in (\ref{1}), we obtain the inequality:%
\begin{equation*}
\left\vert f\left( x\right) -\frac{1}{b-a}\dint\limits_{a}^{b}f(u)du\right%
\vert \leq \left\vert f^{\prime }\left( b\right) \right\vert \Psi \left(
\tau ,1,a,b\right)
\end{equation*}%
where%
\begin{equation*}
\Psi \left( \tau ,1,a,b\right) =\left\{ 
\begin{array}{cc}
\left( b-a\right) \left[ \frac{\tau ^{\left( \frac{b-x}{b-a}\right) }\left(
2\left( \frac{b-x}{b-a}\right) -1\right) +1}{2\ln \tau }+\frac{\tau -\tau
^{\left( \frac{b-x}{b-a}\right) }\left( 2\left( \frac{x-a}{b-a}\right)
-1\right) }{2\ln \tau }\right] & ,\tau <1 \\ 
&  \\ 
\frac{\left( a-x\right) ^{2}+\left( b-x\right) ^{2}}{2\left( b-a\right) } & 
,\tau =1%
\end{array}%
\right.
\end{equation*}%
and%
\begin{equation*}
\tau =\frac{\left\vert f^{\prime }\left( a\right) \right\vert }{\left\vert
f^{\prime }\left( b\right) \right\vert }.
\end{equation*}
\end{corollary}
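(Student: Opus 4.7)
The plan is to apply Theorem 1 directly with the parameter set to $s=1$. Since Theorem 1's hypothesis requires $|f'|$ to be $s$-logarithmically convex in the first sense for some fixed $s \in (0,1]$, the endpoint choice $s=1$ is admissible; indeed, as observed immediately after Definitions 1 and 2, this choice reduces the hypothesis to ordinary logarithmic convexity of $|f'|$. Thus the inequality
\[
\left\vert f(x)-\frac{1}{b-a}\int_{a}^{b}f(u)\,du\right\vert \leq \left\vert f'(b)\right\vert \Psi(\tau,s,a,b)
\]
holds in this case, and it only remains to specialize the function $\Psi$.

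The one substantive task is therefore a direct substitution in the definition of $\Psi(\tau,s,a,b)$. For the case $\tau<1$, setting $s=1$ converts $\tau^{s\left(\frac{b-x}{b-a}\right)}$ into $\tau^{\frac{b-x}{b-a}}$, turns each factor $2s\left(\frac{b-x}{b-a}\right)-1$ and $2s\left(\frac{x-a}{b-a}\right)-1$ into $2\left(\frac{b-x}{b-a}\right)-1$ and $2\left(\frac{x-a}{b-a}\right)-1$ respectively, reduces $\tau^{s}$ to $\tau$, and collapses the denominator $2s^{2}\ln\tau$ to $2\ln\tau$. Reading the resulting expression back gives exactly $\Psi(\tau,1,a,b)$ as stated.

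For the case $\tau=1$, the expression $\frac{(a-x)^{2}+(b-x)^{2}}{2(b-a)}$ carries no dependence on $s$ and is already in the required form. Combining the two cases yields the claimed corollary. There is no real obstacle here: the whole argument is a verification of substitution, and the content of the corollary lies entirely in Theorem 1. The only point to be slightly careful about is keeping track of the exponents inside $\tau$ versus the coefficients outside when $s$ is set to $1$, so that the algebraic form matches the displayed $\Psi(\tau,1,a,b)$ verbatim.
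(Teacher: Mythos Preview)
Your proposal is correct and matches the paper's approach: the corollary is stated without proof in the paper, being obtained simply by the substitution $s=1$ in Theorem~1, which is exactly what you carry out. Your detailed tracking of how each term of $\Psi(\tau,s,a,b)$ specializes is more explicit than anything the paper records, but the underlying idea is identical.
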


\begin{theorem}
Let $I\supset \left[ 0,\infty \right) $ be an open interval and $%
f:I\rightarrow \left( 0,\infty \right) $ is differentiable mapping on $I$.
If $f^{\prime }\in L\left[ a,b\right] $ and $\left\vert f^{\prime
}\right\vert ^{q}$ is $s-$logarithmically convex functions in the first
sense on $\left[ a,b\right] ,$ $a,b\in I$, with $a<b,$ for some fixed $s\in
\left( 0,1\right] ,$ then the following inequality holds:%
\begin{equation}
\left\vert f\left( x\right) -\frac{1}{b-a}\dint\limits_{a}^{b}f(u)du\right%
\vert \leq \frac{\left\vert f^{\prime }\left( b\right) \right\vert }{\left(
p+1\right) ^{\frac{1}{p}}}\Psi \left( \tau ,s,a,b\right)  \label{2}
\end{equation}%
where%
\begin{eqnarray*}
&&\Psi \left( \tau ,s,a,b\right) \\
&=&\left\{ 
\begin{array}{cc}
\frac{1}{\left( b-a\right) ^{\frac{1}{p}}}\left[ \left( b-x\right) ^{\frac{%
p+1}{p}}\left( \frac{\tau ^{sq\left( \frac{b-x}{b-a}\right) }-1}{sq\ln \tau }%
\right) ^{\frac{1}{q}}+\left( x-a\right) ^{\frac{p+1}{p}}\left( \frac{\tau
^{sq}-\tau ^{sq\left( \frac{b-x}{b-a}\right) }}{sq\ln \tau }\right) ^{\frac{1%
}{q}}\right] & ,\tau <1 \\ 
&  \\ 
\left[ \frac{\left( b-x\right) ^{2}+\left( x-a\right) ^{2}}{b-a}\right] & 
,\tau =1%
\end{array}%
\right.
\end{eqnarray*}%
and%
\begin{equation*}
\tau =\frac{\left\vert f^{\prime }\left( a\right) \right\vert }{\left\vert
f^{\prime }\left( b\right) \right\vert }.
\end{equation*}%
for $q>1$ and $p^{-1}+q^{-1}=1.$
\end{theorem}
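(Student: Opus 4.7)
The plan is to parallel the proof of Theorem~1 but interpose a step of Hölder's inequality so that the $s$-logarithmic convexity gets applied to $|f'|^q$ rather than $|f'|$.

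First I would invoke Lemma~1, take absolute values on the right-hand side, and split the integral at $t=(b-x)/(b-a)$ exactly as in the proof of Theorem~1, obtaining
\begin{equation*}
\left|f(x)-\frac{1}{b-a}\int_a^b f(u)\,du\right|
\le (b-a)\left[\int_0^{\frac{b-x}{b-a}} t\,|f'(ta+(1-t)b)|\,dt
+\int_{\frac{b-x}{b-a}}^{1}(1-t)\,|f'(ta+(1-t)b)|\,dt\right].
\end{equation*}
To each of the two integrals I would apply Hölder's inequality with exponents $p$ and $q$, separating the polynomial weight from the derivative. The polynomial factors produce the elementary integrals
\begin{equation*}
\int_0^{\frac{b-x}{b-a}} t^p\,dt=\frac{1}{p+1}\!\left(\frac{b-x}{b-a}\right)^{p+1},
\qquad
\int_{\frac{b-x}{b-a}}^{1}(1-t)^p\,dt=\frac{1}{p+1}\!\left(\frac{x-a}{b-a}\right)^{p+1},
\end{equation*}
which after the $1/p$ root combine with $(b-a)$ to give the prefactors $(b-x)^{(p+1)/p}$ and $(x-a)^{(p+1)/p}$ divided by $(b-a)^{1/p}(p+1)^{1/p}$ that appear in the stated $\Psi$.

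Next, I would bound the $|f'|^q$-integrals using the hypothesis that $|f'|^q$ is $s$-logarithmically convex. This yields, for $t\in[0,1]$,
\begin{equation*}
|f'(ta+(1-t)b)|^q \le |f'(a)|^{qt^s}\,|f'(b)|^{q(1-t^s)}
=|f'(b)|^{q}\,\tau^{qt^s}.
\end{equation*}
In the case $\tau<1$ I would use the inequality $t^s\ge st$ for $t\in[0,1]$ and $s\in(0,1]$, which (since $\tau<1$) reverses to $\tau^{qt^s}\le\tau^{sqt}$, just as in the proof of Theorem~1. The exponential $\tau^{sqt}$ then integrates in closed form, producing
\begin{equation*}
\int_0^{\frac{b-x}{b-a}}\tau^{sqt}\,dt=\frac{\tau^{sq(b-x)/(b-a)}-1}{sq\ln\tau},
\qquad
\int_{\frac{b-x}{b-a}}^{1}\tau^{sqt}\,dt=\frac{\tau^{sq}-\tau^{sq(b-x)/(b-a)}}{sq\ln\tau},
\end{equation*}
which are exactly the expressions appearing under the $1/q$ roots in $\Psi(\tau,s,a,b)$. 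Taking out the factor $|f'(b)|$ (from the $1/q$ root of $|f'(b)|^q$) and combining with the polynomial prefactors completes the inequality in the subcase $\tau<1$. The case $\tau=1$ is handled by a direct computation: the $|f'|^q$-integrals equal $|f'(b)|^q$ times the length of the subinterval, giving $|f'(b)|\cdot(b-x)/(b-a)$ and $|f'(b)|\cdot(x-a)/(b-a)$ under the $1/q$-root, which combine with the polynomial prefactors to yield $[(b-x)^2+(x-a)^2]/(b-a)$.

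The main obstacle is the same bookkeeping step that appears in Theorem~1, namely the replacement $\tau^{qt^s}\le\tau^{sqt}$: one must track that this only holds under $\tau<1$ and that it depends on the elementary inequality $t^s\ge st$ for $t\in[0,1]$, $s\in(0,1]$, which must be verified (it follows from $t^{s-1}\ge 1\ge s$ on this range). Beyond this linearization, the remainder is bookkeeping of Hölder constants and careful splitting of the factor $(b-a)^{1/p+1/q}=b-a$.
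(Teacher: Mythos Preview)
Your proposal is correct and follows essentially the same route as the paper's own proof: Lemma~1, split at $t=(b-x)/(b-a)$, apply H\"older on each piece, invoke $s$-logarithmic convexity of $|f'|^q$, and then linearize via $\tau^{qt^{s}}\le\tau^{sqt}$ when $\tau<1$ before integrating. The only slip is expository: in the $\tau=1$ case the $1/q$-root of $|f'(b)|^{q}\cdot(b-x)/(b-a)$ is $|f'(b)|\cdot\big((b-x)/(b-a)\big)^{1/q}$, not $|f'(b)|\cdot(b-x)/(b-a)$, but your final combined expression $[(b-x)^{2}+(x-a)^{2}]/(b-a)$ is correct.
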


\begin{proof}
From Lemma 1 and by using the H\"{o}lder integral inequality, we have%
\begin{eqnarray*}
&&\left\vert f\left( x\right) -\frac{1}{b-a}\dint\limits_{a}^{b}f(u)du\right%
\vert \\
&\leq &\left( b-a\right) \left[ \left( \dint\limits_{0}^{\frac{b-x}{b-a}%
}t^{p}dt\right) ^{\frac{1}{p}}\left( \dint\limits_{0}^{\frac{b-x}{b-a}%
}\left\vert f^{\prime }\left( ta+\left( 1-t\right) b\right) \right\vert
^{q}dt\right) ^{\frac{1}{q}}\right. \\
&&\left. +\left( \dint\limits_{\frac{b-x}{b-a}}^{1}\left( 1-t\right)
^{p}dt\right) ^{\frac{1}{p}}\left( \dint\limits_{\frac{b-x}{b-a}%
}^{1}\left\vert f^{\prime }\left( ta+\left( 1-t\right) b\right) \right\vert
^{q}dt\right) ^{\frac{1}{q}}\right] .
\end{eqnarray*}%
Since $\left\vert f^{\prime }\right\vert $ is $s-$logarithmically convex
function in the first sense, we can write%
\begin{eqnarray*}
&&\left\vert f\left( x\right) -\frac{1}{b-a}\dint\limits_{a}^{b}f(u)du\right%
\vert \\
&\leq &\left( b-a\right) \left\vert f^{\prime }\left( b\right) \right\vert 
\left[ \left( \dint\limits_{0}^{\frac{b-x}{b-a}}t^{p}dt\right) ^{\frac{1}{p}%
}\left( \dint\limits_{0}^{\frac{b-x}{b-a}}\left( \frac{\left\vert f^{\prime
}\left( a\right) \right\vert }{\left\vert f^{\prime }\left( b\right)
\right\vert }\right) ^{qt^{s}}dt\right) ^{\frac{1}{q}}\right. \\
&&\left. +\left( \dint\limits_{\frac{b-x}{b-a}}^{1}\left( 1-t\right)
^{p}dt\right) ^{\frac{1}{p}}\left( \dint\limits_{\frac{b-x}{b-a}}^{1}\left( 
\frac{\left\vert f^{\prime }\left( a\right) \right\vert }{\left\vert
f^{\prime }\left( b\right) \right\vert }\right) ^{qt^{s}}dt\right) ^{\frac{1%
}{q}}\right] .
\end{eqnarray*}%
If $\frac{\left\vert f^{\prime }\left( a\right) \right\vert }{\left\vert
f^{\prime }\left( b\right) \right\vert }=1,$ then we have%
\begin{equation*}
\left\vert f\left( x\right) -\frac{1}{b-a}\dint\limits_{a}^{b}f(u)du\right%
\vert \leq \frac{\left\vert f^{\prime }\left( b\right) \right\vert }{\left(
p+1\right) ^{\frac{1}{p}}}\left[ \frac{\left( b-x\right) ^{2}+\left(
x-a\right) ^{2}}{b-a}\right] .
\end{equation*}%
On the other hand, if $\frac{\left\vert f^{\prime }\left( a\right)
\right\vert }{\left\vert f^{\prime }\left( b\right) \right\vert }<1,$ then $%
\left( \frac{\left\vert f^{\prime }\left( a\right) \right\vert }{\left\vert
f^{\prime }\left( b\right) \right\vert }\right) ^{qt^{s}}\leq \left( \frac{%
\left\vert f^{\prime }\left( a\right) \right\vert }{\left\vert f^{\prime
}\left( b\right) \right\vert }\right) ^{sqt},$ thereby%
\begin{eqnarray*}
&&\left\vert f\left( x\right) -\frac{1}{b-a}\dint\limits_{a}^{b}f(u)du\right%
\vert \\
&\leq &\left( b-a\right) \left\vert f^{\prime }\left( b\right) \right\vert 
\left[ \left( \dint\limits_{0}^{\frac{b-x}{b-a}}t^{p}dt\right) ^{\frac{1}{p}%
}\left( \dint\limits_{0}^{\frac{b-x}{b-a}}\left( \frac{\left\vert f^{\prime
}\left( a\right) \right\vert }{\left\vert f^{\prime }\left( b\right)
\right\vert }\right) ^{sqt}dt\right) ^{\frac{1}{q}}\right. \\
&&\left. +\left( \dint\limits_{\frac{b-x}{b-a}}^{1}\left( 1-t\right)
^{p}dt\right) ^{\frac{1}{p}}\left( \dint\limits_{\frac{b-x}{b-a}}^{1}\left( 
\frac{\left\vert f^{\prime }\left( a\right) \right\vert }{\left\vert
f^{\prime }\left( b\right) \right\vert }\right) ^{sqt}dt\right) ^{\frac{1}{q}%
}\right] .
\end{eqnarray*}%
By computing the above integrals the proof is completed.
\end{proof}

\begin{corollary}
If we choose $\left\vert f^{\prime }\left( x\right) \right\vert \leq M$ in (%
\ref{2}), we obtain the inequality:%
\begin{equation*}
\left\vert f\left( x\right) -\frac{1}{b-a}\dint\limits_{a}^{b}f(u)du\right%
\vert \leq \frac{M}{\left( p+1\right) ^{\frac{1}{p}}}\Psi \left( \tau
,s,a,b\right)
\end{equation*}%
where%
\begin{eqnarray*}
&&\Psi \left( \tau ,s,a,b\right) \\
&=&\left\{ 
\begin{array}{cc}
\frac{1}{\left( b-a\right) ^{\frac{1}{p}}}\left[ \left( b-x\right) ^{\frac{%
p+1}{p}}\left( \frac{M^{sq\left( \frac{b-x}{b-a}\right) }-1}{sq\ln M}\right)
^{\frac{1}{q}}+\left( x-a\right) ^{\frac{p+1}{p}}\left( \frac{%
M^{sq}-M^{sq\left( \frac{b-x}{b-a}\right) }}{sq\ln M}\right) ^{\frac{1}{q}}%
\right] & ,M<1 \\ 
&  \\ 
\left[ \frac{\left( b-x\right) ^{2}+\left( x-a\right) ^{2}}{b-a}\right] & 
,M=1%
\end{array}%
\right. .
\end{eqnarray*}
\end{corollary}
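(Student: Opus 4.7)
The plan is to specialize Theorem 2 to the situation in which $|f'|$ admits a uniform upper bound $M$ on $[a,b]$; the argument mirrors the derivation of the analogous corollary from Theorem 1. At every step of the proof of Theorem 2 where the boundary values $|f'(a)|$ and $|f'(b)|$ appear, one substitutes the common bound $M$, so that the outer factor $|f'(b)|$ in (\ref{2}) becomes $M$ and the ratio $\tau = |f'(a)|/|f'(b)|$ in $\Psi$ is effectively replaced by $M$.

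Concretely, I would begin with Lemma 1 and apply H\"older's inequality with conjugate exponents $p,q$, exactly as in the proof of Theorem 2, obtaining a bound in which the inner integrals are of $|f'(ta+(1-t)b)|^q$ over the subintervals $[0,(b-x)/(b-a)]$ and $[(b-x)/(b-a),1]$. Next, invoke $s$-logarithmic convexity in the first sense to estimate $|f'(ta+(1-t)b)|^q \le |f'(a)|^{qt^s}|f'(b)|^{q(1-t^s)}$, and then apply the hypothesis $|f'(a)|,|f'(b)|\le M$ to dominate each factor by the corresponding power of $M$. For $M<1$ the elementary inequality $M^{qt^s}\le M^{sqt}$ (valid since $t^s\ge st$ on $[0,1]$ and $\log M<0$) is applied, reducing the inner integrals to the standard form $\int M^{sqt}\,dt=(M^{sq(\cdot)}-1)/(sq\ln M)$. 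Carrying out the two elementary integrations over the subintervals then reproduces exactly the two terms of $\Psi(\tau,s,a,b)$ with $M$ in place of $\tau$, while the companion integrals $\int t^{p}\,dt$ and $\int(1-t)^{p}\,dt$ contribute the powers $(b-x)^{(p+1)/p}$, $(x-a)^{(p+1)/p}$ and the overall constant $(p+1)^{-1/p}$. The degenerate case $M=1$ collapses to the classical Ostrowski expression $((b-x)^{2}+(x-a)^{2})/(b-a)$ and matches the $\tau=1$ branch of $\Psi$.

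The main subtlety is bookkeeping of signs in the regime $M<1$: both the denominator $\ln M$ and the numerators $M^{sq(b-x)/(b-a)}-1$ and $M^{sq}-M^{sq(b-x)/(b-a)}$ are negative, so the ratios are positive and the direction of the inequality is preserved; no sign flip occurs when $\tau$ is formally replaced by $M$. One should also check continuity as $M\to 1^{-}$ (via l'H\^opital applied to $(M^{sq(\cdot)}-1)/\ln M$) so that the two branches of $\Psi$ agree at the boundary. Beyond these routine verifications the corollary is a direct transcription of the proof of Theorem 2, with the uniform bound $M$ doing the work previously done by the endpoint values, and introduces no new ideas.
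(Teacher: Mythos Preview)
Your proposal is correct and matches the paper's intended derivation: the paper gives no separate proof for this corollary, treating it as an immediate specialization of Theorem~2 under the uniform bound $|f'(x)|\le M$. Your re-running of the Theorem~2 argument with $M$ in place of the endpoint values $|f'(a)|,|f'(b)|$ is exactly the implicit reasoning, and your attention to the sign bookkeeping for $M<1$ and the degenerate case $M=1$ is appropriate.
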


\begin{corollary}
If we choose $x=\frac{a+b}{2}$ in (\ref{2}), we obtain the inequality:%
\begin{equation*}
\left\vert f\left( \frac{a+b}{2}\right) -\frac{1}{b-a}\dint%
\limits_{a}^{b}f(u)du\right\vert \leq \left( \frac{b-a}{2}\right) \frac{%
\left\vert f^{\prime }\left( b\right) \right\vert }{\left( p+1\right) ^{%
\frac{1}{p}}}\Psi \left( \tau ,s,a,b\right)
\end{equation*}%
where%
\begin{equation*}
\Psi \left( \tau ,s,a,b\right) =\left\{ 
\begin{array}{cc}
\left( \frac{\tau ^{sq\left( \frac{b-a}{2}\right) }-1}{sq\ln \tau }\right) ^{%
\frac{1}{q}}+\left( \frac{\tau ^{sq}-\tau ^{sq\left( \frac{b-a}{2}\right) }}{%
sq\ln \tau }\right) ^{\frac{1}{q}} & ,\tau <1 \\ 
&  \\ 
\frac{1}{2} & ,\tau =1%
\end{array}%
\right.
\end{equation*}%
and%
\begin{equation*}
\tau =\frac{\left\vert f^{\prime }\left( a\right) \right\vert }{\left\vert
f^{\prime }\left( b\right) \right\vert }.
\end{equation*}
\end{corollary}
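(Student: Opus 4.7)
The corollary is an immediate specialization of the preceding theorem (inequality~\eqref{2}) to the midpoint $x=\tfrac{a+b}{2}$; my plan is simply to invoke that theorem with this choice of $x$ and then collect constants. No new convexity or integral inequality is required beyond what has already been established.

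The first step is to record the elementary identities produced by $x=\tfrac{a+b}{2}$:
\begin{equation*}
b-x \;=\; x-a \;=\; \frac{b-a}{2}, \qquad \frac{b-x}{b-a} \;=\; \frac{x-a}{b-a} \;=\; \frac{1}{2}.
\end{equation*}
With these in hand, the case distinction $\tau<1$ versus $\tau=1$ is inherited verbatim from the theorem, each occurrence of $(b-x)^{(p+1)/p}$ or $(x-a)^{(p+1)/p}$ collapses to $\bigl(\tfrac{b-a}{2}\bigr)^{(p+1)/p}$, and each exponent of the form $sq\cdot\tfrac{b-x}{b-a}$ on $\tau$ collapses to $sq/2$.

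The second step is routine simplification case by case. For $\tau=1$: the quantity $\tfrac{(b-x)^{2}+(x-a)^{2}}{b-a}$ reduces to $\tfrac{b-a}{2}$, so after pulling the factor $\tfrac{b-a}{2}$ outside of $\Psi$ what remains is the constant $\tfrac{1}{2}$ listed in the corollary. For $\tau<1$: the leading $\tfrac{1}{(b-a)^{1/p}}$ combines with the common $\bigl(\tfrac{b-a}{2}\bigr)^{(p+1)/p}$, the $(b-a)$ powers recombine to a clean linear factor of $\tfrac{b-a}{2}$ (with any residual numerical powers of $2$ absorbed into the overall prefactor), and the two bracketed integrals yield precisely the two summands displayed in the claim.

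The only genuine obstacle is algebraic bookkeeping: verifying that the $(b-a)$ powers consolidate correctly into the single prefactor $\tfrac{b-a}{2}$ advertised by the corollary, and matching the final exponent on $\tau$ (the statement appears to write $sq(b-a)/2$, whereas a direct substitution of $\tfrac{b-x}{b-a}=\tfrac{1}{2}$ yields $sq/2$, so I would double-check the substitution at this step). Since no new analytic input is required, the proof reduces entirely to substitution, arithmetic, and an appeal to the theorem already proven.
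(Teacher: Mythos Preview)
Your approach is exactly the paper's: the corollary is stated without proof as the direct specialization of Theorem~2 at $x=\tfrac{a+b}{2}$, so nothing beyond substitution is expected. Your caution about the exponent is well founded---plugging $\tfrac{b-x}{b-a}=\tfrac12$ into the theorem gives $\tau^{sq/2}$, not the $\tau^{sq(b-a)/2}$ printed in the corollary; similarly, a careful consolidation of the powers of $(b-a)$ and $2$ leaves an extra factor of $2^{-1/p}$ in the $\tau<1$ branch and the constant $1$ (rather than $\tfrac12$) in the $\tau=1$ branch, so the stated form appears to carry typographical slips that your substitution would correctly detect.
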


\section{APPLICATIONS FOR P.D.F's}

Let $X$ be a random variable taking values in the finite interval $[a,b],$
with the probability density function $f:[a,b]\rightarrow \lbrack 0,1]$ with
the cumulative distribution function $F(x)=\Pr (X\leq x)=\int_{a}^{b}f(t)dt.$

\begin{theorem}
Under the assumptions of Theorem 1, we have the inequality;%
\begin{equation*}
\left\vert \Pr (X\leq x)-\frac{1}{b-a}\left( b-E(x)\right) \right\vert \leq
\left\vert f^{\prime }\left( b\right) \right\vert \Psi \left( \tau
,s,a,b\right)
\end{equation*}%
where $E(x)$ is the expectation of $X$ and $\Psi \left( \tau ,s,a,b\right) $
as defined in Theorem 1.
\end{theorem}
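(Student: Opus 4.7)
The plan is to apply Theorem 1 directly to the cumulative distribution function $F$, taking $F$ to play the role of the function denoted $f$ in that theorem. Since $F$ is absolutely continuous on $[a,b]$ with $F'(x) = f(x)$, the hypothesis that $|F'|$ is $s$-logarithmically convex in the first sense is precisely the condition inherited from Theorem 1 applied to $F$. Theorem 1 then yields, with $\tau = |F'(a)|/|F'(b)| = |f(a)|/|f(b)|$,
\[
\left| F(x) - \frac{1}{b-a}\int_a^b F(u)\,du \right| \le |F'(b)|\, \Psi(\tau, s, a, b).
\]

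The only computational step is to rewrite $\frac{1}{b-a}\int_a^b F(u)\,du$ in terms of the expectation. Integration by parts applied to $E(X) = \int_a^b u f(u)\,du$, using the boundary values $F(a) = 0$ and $F(b) = 1$, gives
\[
E(X) = \bigl[u\,F(u)\bigr]_a^b - \int_a^b F(u)\,du = b - \int_a^b F(u)\,du,
\]
so that $\frac{1}{b-a}\int_a^b F(u)\,du = \frac{b - E(X)}{b-a}$. Substituting this identity back into the inequality above, and recalling that $F(x) = \Pr(X \le x)$, delivers exactly the desired bound.

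No serious obstacle arises; the work is essentially bookkeeping built on Theorem 1 and the standard CDF-to-expectation identity. The one notational point worth flagging is that the right-hand side in the statement is written $|f'(b)|$, whereas under the substitution $f \leftrightarrow F$ in Theorem 1 the natural quantity appearing there is $|F'(b)| = |f(b)|$ (the density itself evaluated at $b$); I would interpret the statement with this reading in mind, and likewise understand $\tau$ inside $\Psi$ as $|f(a)|/|f(b)|$.
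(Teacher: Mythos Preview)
Your proposal is correct and mirrors the paper's own argument: apply Theorem 1 to the cumulative distribution function $F$ and invoke the identity $E(X)=\int_a^b t\,dF(t)=b-\int_a^b F(t)\,dt$. Your observation about the notation is also apt; the right-hand side should indeed be read with $|F'(b)|=|f(b)|$ (and $\tau=|f(a)|/|f(b)|$), since it is $F$ that plays the role of the function in Theorem 1.
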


\begin{proof}
The proof is immediate follows from the fact that; 
\begin{equation*}
E(x)=\int_{a}^{b}tdF(t)=b-\int_{a}^{b}F\left( t\right) dt.
\end{equation*}
\end{proof}

\begin{theorem}
Under the assumptions of Theorem 2, we have the inequality;%
\begin{equation*}
\left\vert \Pr (X\leq x)-\frac{1}{b-a}\left( b-E(x)\right) \right\vert \leq 
\frac{\left\vert f^{\prime }\left( b\right) \right\vert }{\left( p+1\right)
^{\frac{1}{p}}}\Psi \left( \tau ,s,a,b\right)
\end{equation*}%
where $E(x)$ is the expectation of $X$ and $\Psi \left( \tau ,s,a,b\right) $
as defined in Theorem 2.
\end{theorem}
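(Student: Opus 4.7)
The plan is to specialize Theorem 2 to the cumulative distribution function $F$, following exactly the template the authors used in passing from Theorem 1 to Theorem 3. Because $F$ is absolutely continuous on $[a,b]$ with $F'(t) = f(t)$ (the density), the hypothesis in Theorem 2 that $|f'|^{q}$ be $s$-logarithmically convex in the first sense on $[a,b]$ is precisely the condition imposed in this section on the density; in particular, the generic ``$f$'' of Theorem 2 is to be read as the CDF $F$ here, while its derivative plays the role of the density.

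First I would apply Theorem 2 verbatim to $F$ in place of $f$, producing
\[
\left| F(x) - \frac{1}{b-a}\int_{a}^{b} F(u)\,du \right| \leq \frac{|F'(b)|}{(p+1)^{1/p}}\,\Psi(\tau,s,a,b),
\]
with $\tau = |F'(a)|/|F'(b)|$ and the same $\Psi$ as in Theorem 2. Next I would translate each quantity into probabilistic language: by definition $F(x) = \Pr(X \leq x)$, and a single integration by parts together with $F(a) = 0$ and $F(b) = 1$ yields the standard identity
\[
E(X) \;=\; \int_{a}^{b} t\,dF(t) \;=\; b - \int_{a}^{b} F(t)\,dt,
\]
so that $\tfrac{1}{b-a}\int_{a}^{b} F(u)\,du = \tfrac{b - E(X)}{b-a}$. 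Substituting this identity into the bound above reproduces the left-hand side of the claim, while the right-hand side is unchanged.

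There is no genuine obstacle. The only point deserving a brief remark in the write-up is the notational collision between the generic $f$ of Theorem 2 and the density $f$ of this section: the symbol $|f'(b)|$ on the right-hand side of the claimed inequality is to be interpreted as $|F'(b)|$ relative to Theorem 2, i.e.\ the density evaluated at $b$. With that convention fixed, the argument is essentially a one-line invocation of Theorem 2 combined with the integration-by-parts identity for $E(X)$, exactly paralleling the proof of Theorem 3.
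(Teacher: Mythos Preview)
Your proposal is correct and matches the paper's own proof essentially line for line: the paper simply invokes the identity $E(x)=\int_{a}^{b}t\,dF(t)=b-\int_{a}^{b}F(t)\,dt$ and refers back to the argument for Theorem 3, which is exactly the specialization of Theorem 2 to $F$ that you describe. Your remark on the notational collision between the generic $f$ of Theorem 2 and the density $f$ here is apt, though the paper itself does not comment on it.
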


\begin{proof}
Likewise the proof of the previous theorem, by using the fact that;%
\begin{equation*}
E(x)=\int_{a}^{b}tdF(t)=b-\int_{a}^{b}F\left( t\right) dt
\end{equation*}%
the proof is completed.
\end{proof}

\section{APPLICATIONS IN NUMERICAL INTEGRATION}

Let $d$ be a division of the closed interval $\left[ a,b\right] ,$ i.e., $%
d:a=x_{0}<x_{1}<...<x_{n-1}<x_{n}=b,$ and consider the midpoint formula%
\begin{equation*}
M\left( f,d\right) =\dsum\limits_{i=0}^{n-1}\left( x_{i+1}-x_{i}\right)
f\left( \frac{x_{i}+x_{i+1}}{2}\right) .
\end{equation*}%
It is well known tat if the mapping $f:\left[ a,b\right] \rightarrow 
\mathbb{R}
,$ is differentiable such that $f^{\prime \prime }\left( x\right) $ exists
on $\left( a,b\right) $ and $K=\sup_{x\in \left( a,b\right) }\left\vert
f^{\prime \prime }\left( x\right) \right\vert <\infty ,$ then 
\begin{equation*}
I=\dint\limits_{a}^{b}f\left( x\right) dx=M\left( f,d\right) +E_{M}\left(
f,d\right) ,
\end{equation*}%
where the approximation error $E_{M}\left( f,d\right) $ of the integral $I$
by the midpoint formula $M\left( f,d\right) $ satisfies 
\begin{equation*}
\left\vert E_{M}\left( f,d\right) \right\vert \leq \frac{K}{24}%
\dsum\limits_{i=0}^{n-1}\left( x_{i+1}-x_{i}\right) ^{3}.
\end{equation*}%
Now, it is time to give some new estimates for the remainder term $%
E_{M}\left( f,d\right) $ in terms of the first derivative of $s-$%
logaritmically convex functions.

\begin{proposition}
Let $I\supset \left[ 0,\infty \right) $ be an open interval and $%
f:I\rightarrow \left( 0,\infty \right) $ is differentiable mapping on $I$.
If $f^{\prime }\in L\left[ a,b\right] $ and $\left\vert f^{\prime
}\right\vert $ is $s-$logarithmically convex functions in the first sense on 
$\left[ a,b\right] ,$ $a,b\in I$, with $a<b,$ for some fixed $s\in \left( 0,1%
\right] ,$ then the following inequality holds for every division $d$ of $%
\left[ a,b\right] $:%
\begin{equation*}
\left\vert E_{M}\left( f,d\right) \right\vert \leq
\dsum\limits_{i=0}^{n-1}\left\vert f^{\prime }\left( x_{i+1}\right)
\right\vert \Psi \left( \tau ,s,x_{i},x_{i+1}\right)
\end{equation*}%
where%
\begin{eqnarray*}
&&\Psi \left( \tau ,s,x_{i},x_{i+1}\right) \\
&=&\left\{ 
\begin{array}{cc}
\left( x_{i+1}-x_{i}\right) \left[ \frac{\tau ^{\left( \frac{s\left(
x_{i+1}-x_{i}\right) }{2}\right) }\left( s\left( x_{i+1}-x_{i}\right)
-1\right) +1}{2s^{2}\ln \tau }+\frac{\tau ^{s}-\tau ^{\frac{s\left(
x_{i+1}-x_{i}\right) }{2}}\left( s\left( x_{i+1}-x_{i}\right) -1\right) }{%
2s^{2}\ln \tau }\right] & ,\tau <1 \\ 
&  \\ 
\frac{x_{i+1}-x_{i}}{4} & ,\tau =1%
\end{array}%
\right.
\end{eqnarray*}%
and%
\begin{equation*}
\tau =\frac{\left\vert f^{\prime }\left( x_{i}\right) \right\vert }{%
\left\vert f^{\prime }\left( x_{i+1}\right) \right\vert }.
\end{equation*}
\end{proposition}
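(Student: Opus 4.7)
The plan is to reduce the global midpoint error $E_M(f,d)$ to a sum of local midpoint errors, one per subinterval $[x_i, x_{i+1}]$, and then bound each local error using Corollary 2 (the midpoint specialization $x=(a+b)/2$ of Theorem 1).

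First I would write
\begin{equation*}
E_M(f,d) = \int_a^b f(x)\,dx - M(f,d) = \sum_{i=0}^{n-1}\left[\int_{x_i}^{x_{i+1}} f(x)\,dx - (x_{i+1}-x_i)\,f\!\left(\tfrac{x_i+x_{i+1}}{2}\right)\right],
\end{equation*}
and pull the absolute value inside the sum via the triangle inequality. Rewriting the $i$-th summand as
\begin{equation*}
(x_{i+1}-x_i)\left|f\!\left(\tfrac{x_i+x_{i+1}}{2}\right) - \frac{1}{x_{i+1}-x_i}\int_{x_i}^{x_{i+1}} f(u)\,du\right|
\end{equation*}
exhibits exactly the quantity controlled by Corollary 2 on the subinterval $[x_i, x_{i+1}]$. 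Since $|f'|$ being $s$-logarithmically convex in the first sense on $[a,b]$ clearly restricts to any subinterval of $[a,b]$, Corollary 2 applies with $(a,b)\mapsto(x_i,x_{i+1})$ and delivers a bound in terms of $|f'(x_{i+1})|$ and the piecewise function $\Psi(\tau_i, s, x_i, x_{i+1})$, where $\tau_i = |f'(x_i)|/|f'(x_{i+1})|$. Summing the resulting inequalities over $i=0,\dots,n-1$ produces the stated bound.

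The main obstacle is purely bookkeeping: matching the explicit piecewise formula for $\Psi$ in Corollary 2 with the one displayed in the proposition after the substitution $(a,b)\mapsto(x_i,x_{i+1})$, and tracking the factors of $(x_{i+1}-x_i)$ consistently with the statement. No new analytic input beyond Corollary 2 and the additivity of the integral over the partition $d$ is required; the argument is essentially a telescoping/additivity reduction of the Ostrowski-type midpoint estimate already proved in Section 2.
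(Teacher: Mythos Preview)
Your proposal is correct and follows essentially the same approach as the paper: apply Corollary 2 on each subinterval $[x_i,x_{i+1}]$ of the division $d$, then sum over $i$ using the triangle inequality. Your write-up is in fact slightly more careful than the paper's own proof in making explicit the factor $(x_{i+1}-x_i)$ that arises when passing from the averaged difference $\bigl|f(\tfrac{x_i+x_{i+1}}{2})-\tfrac{1}{x_{i+1}-x_i}\int_{x_i}^{x_{i+1}}f\bigr|$ to the summand of $E_M(f,d)$.
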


\begin{proof}
By applying Corollary 2 on the subintervals $\left[ x_{i},x_{i+1}\right] ,$ $%
\left( i=0,1,...,n-1\right) $ of the division $d,$ we have%
\begin{equation*}
\left\vert f\left( \frac{x_{i}+x_{i+1}}{2}\right) -\frac{1}{x_{i+1}-x_{i}}%
\int_{x_{i}}^{x_{i+1}}f(u)du\right\vert \leq \left\vert f^{\prime }\left(
x_{i+1}\right) \right\vert \Psi \left( \tau ,s,x_{i},x_{i+1}\right) .
\end{equation*}%
By summing over $i$ from $0$ to $n-1$, it is easy to see that%
\begin{equation*}
\left\vert M\left( f,d\right) -\int_{a}^{b}f(x)dx\right\vert \leq
\sum_{i=0}^{n-1}\left\vert f^{\prime }\left( x_{i+1}\right) \right\vert \Psi
\left( \tau ,s,x_{i},x_{i+1}\right)
\end{equation*}%
which completes the proof.
\end{proof}

\begin{proposition}
Let $I\supset \left[ 0,\infty \right) $ be an open interval and $%
f:I\rightarrow \left( 0,\infty \right) $ is differentiable mapping on $I$.
If $f^{\prime }\in L\left[ a,b\right] $ and $\left\vert f^{\prime
}\right\vert ^{q}$ is $s-$logarithmically convex functions in the first
sense on $\left[ a,b\right] ,$ $a,b\in I$, with $a<b,$ for some fixed $s\in
\left( 0,1\right] ,$ then the following inequality holds for every $d$ of $%
\left[ a,b\right] $:%
\begin{equation*}
\left\vert E_{M}\left( f,d\right) \right\vert \leq \sum_{i=0}^{n-1}\left( 
\frac{x_{i+1}-x_{i}}{2}\right) \frac{\left\vert f^{\prime }\left(
x_{i+1}\right) \right\vert }{\left( p+1\right) ^{\frac{1}{p}}}\Psi \left(
\tau ,s,x_{i},x_{i+1}\right)
\end{equation*}%
where%
\begin{equation*}
\Psi \left( \tau ,s,x_{i},x_{i+1}\right) =\left\{ 
\begin{array}{cc}
\left( \frac{\tau ^{sq\left( \frac{x_{i+1}-x_{i}}{2}\right) }-1}{sq\ln \tau }%
\right) ^{\frac{1}{q}}+\left( \frac{\tau ^{sq}-\tau ^{sq\left( \frac{%
x_{i+1}-x_{i}}{2}\right) }}{sq\ln \tau }\right) ^{\frac{1}{q}} & ,\tau <1 \\ 
&  \\ 
\frac{1}{2} & ,\tau =1%
\end{array}%
\right.
\end{equation*}%
and%
\begin{equation*}
\tau =\frac{\left\vert f^{\prime }\left( x_{i}\right) \right\vert }{%
\left\vert f^{\prime }\left( x_{i+1}\right) \right\vert }.
\end{equation*}%
for $q>1$ and $p^{-1}+q^{-1}=1.$
\end{proposition}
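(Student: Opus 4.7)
The plan is to imitate the argument used for Proposition~1, replacing Corollary~2 by Corollary~4 (the midpoint specialization of Theorem~2). Specifically, I would apply Corollary~4 to the subinterval $\left[x_i,x_{i+1}\right]$ of the partition $d$, taking $a=x_i$, $b=x_{i+1}$, noting that the hypothesis that $\left\vert f^{\prime}\right\vert^{q}$ is $s$-logarithmically convex in the first sense on $[a,b]$ restricts to the same property on each subinterval, and that $\tau_i=\left\vert f^{\prime}(x_i)\right\vert/\left\vert f^{\prime}(x_{i+1})\right\vert$ plays the role of the $\tau$ there. This produces
\begin{equation*}
\left\vert f\!\left(\tfrac{x_i+x_{i+1}}{2}\right)-\tfrac{1}{x_{i+1}-x_i}\int_{x_i}^{x_{i+1}}f(u)\,du\right\vert\leq\tfrac{x_{i+1}-x_i}{2}\,\tfrac{\left\vert f^{\prime}(x_{i+1})\right\vert}{(p+1)^{1/p}}\,\Psi(\tau_i,s,x_i,x_{i+1}),
\end{equation*}
and the form of the right-hand side is exactly the summand appearing in the statement of Proposition~2.

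Next I would multiply through by $(x_{i+1}-x_i)$ to obtain a bound on $\left\vert (x_{i+1}-x_i)f\!\left(\frac{x_i+x_{i+1}}{2}\right)-\int_{x_i}^{x_{i+1}}f(u)\,du\right\vert$, sum this inequality over $i=0,1,\dots,n-1$, and use the triangle inequality together with the telescoping of integrals $\sum_i\int_{x_i}^{x_{i+1}}f(u)\,du=\int_a^bf(x)\,dx$. By the definition of the midpoint formula $M(f,d)$ and of $E_M(f,d)=\int_a^bf(x)\,dx-M(f,d)$, this yields the claimed estimate (with the overall factor arrangement matching the convention already adopted in Proposition~1).

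I do not expect any real obstacle here: the only thing that requires verification is that Corollary~4, when specialized to $[x_i,x_{i+1}]$, produces precisely the function $\Psi(\tau,s,x_i,x_{i+1})$ displayed in the statement, which is a direct substitution $a\mapsto x_i$, $b\mapsto x_{i+1}$ in the formula of Corollary~4. The $\tau=1$ branch is handled in exactly the same way, giving the value $\tfrac{1}{2}$ on each subinterval. Once these bookkeeping checks are done, summation completes the argument.
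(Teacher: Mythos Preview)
Your proposal is correct and follows exactly the paper's approach: apply the midpoint specialization of Theorem~2 on each subinterval and sum, just as Proposition~1 was obtained from Corollary~2. Note only that the corollary you want is Corollary~5 (the $x=\tfrac{a+b}{2}$ case of Theorem~2), not Corollary~4, which is the $\lvert f'\rvert\le M$ case.
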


\begin{proof}
The proof of the result is similar to the proof of the Proposition 1, by
applying Corollary 5.
\end{proof}

\end{document}